\newtheorem{thm}{Theorem}[section]
\newtheorem{cor}[thm]{Corollary}
\newtheorem{lem}[thm]{Lemma}
\theoremstyle{definition}
\theoremstyle{remark}
\numberwithin{equation}{section}
\begin{document}
\title[Hyers-Ulam stability ]{Hyers-Ulam stability of
Jensen functional  equation on amenable semigroups}
\author[ B. Bouikhalene and  E. Elqorachi  ]{Bouikhalene Belaid and Elqorachi Elhoucien  }


\begin{abstract}
In this paper, we give a  proof of the  Hyers-Ulam stability of
the Jensen functional equation
$$f(xy)+f(x\sigma(y))=2f(x),\phantom{+} x,y\in{G},$$ where $G$ is an amenable semigroup and $\sigma$ is an involution of $G.$
\end{abstract}
\maketitle
\section{Introduction}
In 1940,  Ulam \cite{43} gave a wide ranging talk before the
Mathematics Club of the University of wisconsin in which he
discussed a number of important unsolved problems. Among those was
the question concerning the stability of group homomorphisms:
Given a group $G_{1}$, a metric group $(G_{2},d)$, a number
$\epsilon>0$ and a mapping $f$: $G_{1}\longrightarrow G_{2}$ which
satisfies $d(f(xy),f(x)f(y))<\epsilon$ for all $x,y\in{G_{1}}$,
does there exist an homomorphism $g$: $G_{1}\longrightarrow G_{2}$
and a constant $k>0$, depending only on $G_{1}$ and $G_{2}$ such
that $d(f(x),g(x))<k\epsilon$ for all $x\in{G_{1}}$?\\ In 1941,
Hyers \cite{H} considered the case of approximately additive
mappings under the assumption that $G_{1}$ and $G_{2}$ are Banach
spaces.\\    Rassias \cite{19} provided a generalization of the
Hyers' Theorem for linear mappings, by allowing the Cauchy
difference to be unbounded.
\\Beginning around the year 1980, several results for the
Hyers-Ulam-Rassias stability of very many functional equations
have been proved by several researchers.
For more detailed, we can refer for example  to \cite{4c}, \cite{Ga}, \cite{Gav}, \cite{H2}, \cite{Jung}, \cite{12f}, \cite{Yang}. \\
Let $G$ be a semigroup with neutral element $e$. Let $\sigma$ be
an involution of $G$, which means that
$\sigma(xy)=\sigma(y)\sigma(x)$ and $\sigma(\sigma(x))=x$ for all
$x,y\in G$.\\We say that $f$: $G\longrightarrow \mathbb{C}$
satisfies the Jensen functional equation if
\begin{equation}\label{eq1}
    f(xy)+f(x\sigma(y))=2f(x)
\end{equation} for all $x,y\in G$. \\The Jensen functional equation (\ref{eq1}) takes the form \begin{equation}\label{eq2}
    f(xy)+f(xy^{-1})=2f(x)
\end{equation} for all $x,y\in G$, when $\sigma(x)=x^{-1}$ and $G$
is a group.
\\The stability in the sens of Hyers-Ulam of the Jensen equations (\ref{eq1}) and (\ref{eq2})  has been studied
by various authors when $G$ is an abelian group or a vector space.
 The interested reader can be referred  to   Jung \cite{3},
  Kim \cite{4} and   Bouikhalene et al. \cite{a,b}. \\
 Recently,  Faiziev and  Sahoo \cite{1} have proved the
 Hyers-Ulam stability of equation (\ref{eq2}) on some noncommutative groups such as metabelian groups and $T(2,K)$, where
 $K$ is an arbitrary commutative field with characteristic different from two.\\ In this paper, motivated by the ideas of
  Forti,  Sikorska \cite{2} and  Yang \cite{5}, we give a
 proof of the Hyers-Ulam stability of the Jensen functional
 equation (\ref{eq1}), under the condition that $G$ is an amenable
 semigroup.
\section{Hyers-Ulam stability of  equation (\ref{eq1})
on amenable semigroups} In this section we investigate the
Hyers-Ulam stability of equation (\ref{eq1}) on an amenable
semigroup $G$. \\We recall that a semigroupe $G$ is said to be
amenable if there exists an invariant mean on the space of the
bounded complex functions defined on $G$. We refer to \cite{7} for
the definition and properties of invariant means.\\The main result
of the present paper is the following.
\begin{thm}Let $G$ be an amenable semigroup with neutral element $e$.
Let $f$: $G\longrightarrow \mathbb{C}$ be a function satisfying the following inequality
\begin{equation}\label{eq21}
\mid f(xy)+f(x\sigma(y))-2f(x)\mid\leq \delta
\end{equation} for all $x,y\in G$ and for some nonnegative $\delta$.
Then there exists a unique solution $g$ of the Jensen equation (\ref{eq1}) such that
\begin{equation}\label{eq22}
    \mid f(x)-g(x)-f(e)\mid\leq 3\delta
\end{equation}for all $x\in G.$\end{thm}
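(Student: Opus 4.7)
The plan is to normalize $f$, then construct an exact Jensen solution $g$ as an invariant-mean average of a suitable bounded expression built from $f$, and finally argue uniqueness via the same mean.

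First, I would normalize: since $f - f(e)$ also satisfies \eqref{eq21}, we may assume $f(e) = 0$, so the target becomes $|f(x) - g(x)| \le 3\delta$. Then I would record the pointwise estimates that will each contribute one factor of $\delta$: setting $x = e$ in \eqref{eq21} yields $|f(y) + f(\sigma(y))| \le \delta$; the involution axioms force $\sigma(e) = e$, since $\sigma(e)$ is easily checked to be a two-sided identity and the neutral element of a monoid is unique; and, since $z\sigma(z)$ is $\sigma$-invariant, substituting $y = z\sigma(z)$ in \eqref{eq21} gives $|f(u \cdot z\sigma(z)) - f(u)| \le \delta/2$ for all $u, z \in G$. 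These three estimates---the Jensen inequality itself, the even-part bound, and the $\sigma$-invariant-element bound---are the sole pointwise ingredients used.

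Next, I would fix an invariant mean $M$ on $\ell^{\infty}(G)$, which exists by amenability, and replace it if necessary by its $\sigma$-symmetrization so that $M$ is additionally $\sigma$-invariant. The candidate $g$ is defined as an $M$-average in an auxiliary variable $y$ of a combination of shifts of $f$ chosen so that the integrand is uniformly bounded in $y$; here the $\sigma$-invariant-element estimate from the first step supplies the boundedness in the non-commutative setting, replacing the straightforward argument available in the Cauchy case. I would then compute $g(xa) + g(x\sigma(a)) - 2g(x)$: using the anti-homomorphism identity $\sigma(xa) = \sigma(a)\sigma(x)$ to rewrite the $y$-shifts and then applying the two-sided translation invariance of $M$, the expression reduces to $0$, showing that $g$ exactly satisfies \eqref{eq1}. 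Summing the three pointwise estimates inside $M$ yields $|f(x) - g(x)| \le 3\delta$.

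For uniqueness, if $g_{1}, g_{2}$ both satisfy \eqref{eq1} and \eqref{eq22}, then $h := g_{1} - g_{2}$ is bounded and satisfies \eqref{eq1}; arranging $g_{1}(e) = g_{2}(e) = 0$ gives $h(e) = 0$, and setting $x = e$ in \eqref{eq1} applied to $h$ shows $h$ is $\sigma$-odd. Applying $M_{y}$ to $h(xy) + h(x\sigma(y)) = 2h(x)$, the left- and $\sigma$-invariance of $M$ collapse the left side to $2 M_{y}[h(y)]$, yielding $h(x) = M_{y}[h(y)]$, a constant independent of $x$; together with $h(e) = 0$ this forces $h \equiv 0$. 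The hard part will be the construction step: pinning down the bounded auxiliary expression whose $M$-average is an \emph{exact} (not merely approximate) Jensen function on a general non-abelian amenable semigroup---Hyers-type iterative arguments on powers fail here, and the Cauchy-case trick of averaging $y \mapsto f(xy) - f(y)$ fails because this function need not be bounded under \eqref{eq21}.
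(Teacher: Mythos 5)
Your proposal leaves its decisive step unproved: you never write down the bounded auxiliary expression whose invariant-mean average is an exact Jensen solution, and you concede at the end that pinning it down is ``the hard part''. That construction is the entire content of the theorem, so what you have is a plan, not a proof. Moreover, the sketch is optimistic about what the mean alone delivers. In the paper's argument one works with the odd part $f^{o}$: a chain of triangle inequalities starting from (\ref{eq21}) and the even-part estimate (not the bound $\mid f(uz\sigma(z))-f(u)\mid\leq\delta/2$ you list) shows that $\mid f^{o}(yx)+f^{o}(y\sigma(x))-2f^{o}(y)\mid\leq 5\delta$ and that $x\longmapsto f^{o}(yx)-f^{o}(x\sigma(y))$ is bounded, and one sets $\phi(y)=m\{{}_{y}f^{o}-f^{o}_{\sigma(y)}\}$. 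Two-sided invariance of $m$ then yields only that $\phi$ satisfies the \emph{Drygas} equation (\ref{eq23}), not the Jensen equation: in the non-commutative setting the Jensen difference of the averaged function does not simply ``reduce to $0$'' as you assert. Upgrading from Drygas to Jensen is a separate, substantive step: Lemma \ref{lem21} identifies the Drygas solution at bounded distance from $f^{o}$ as $\phi/2=\lim_{n}2^{-n}f^{o}(x^{2^{n}})$, hence odd, and an odd Drygas solution is a Jensen solution; the constant $3\delta$ then arises as $\delta/2+5\delta/2$, not as a sum of your three pointwise estimates. Note also that the power-limit ideas you dismiss as ``failing here'' are in fact an essential ingredient of the successful argument.

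Your uniqueness step has a gap of its own: you cannot simply ``arrange'' $g_{1}(e)=g_{2}(e)=0$, since replacing $g_{i}$ by $g_{i}-g_{i}(e)$ alters the inequality (\ref{eq22}), and whether two admissible solutions can differ by a constant is exactly the point at issue; your mean computation only shows that $h=g_{1}-g_{2}$ equals the constant $m(h)$. The paper instead exploits identities valid for every Jensen solution, $g(e)=g^{e}(x)=g(x\sigma(x))$ and $g(x^{2^{n}})+(2^{n}-1)g(e)=2^{n}g(x)$, together with the argument of Yang's Proposition 3; an argument of that kind, not an unjustified normalization, is what you would need to supply.
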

First, we recall the following lemma which is a generalization of
the useful lemma obtained by  Forti and  Sikorska in \cite{2}. The
proof was given by the authors in \cite{E}.
\begin{lem}\label{lem21} Let $G$ be a semigroup and $B$ be
a Banach space. Let $f:G\rightarrow B$ be a function for which
there exists a solution $g$ of the Drygas functional equation
\begin{equation}\label{eq23}g(yx)+g(\sigma(y)x)=2g(x)+g(y)+g(\sigma(y)),\;\;\;x,y\in
G
\end{equation} such that  $\|f(x)-g(x)\|\leq M$, for all
$x\in{G}$ and for some $M\geq0$. Then
\begin{equation}\label{eq24}
g(x)=\lim_{n\rightarrow+\infty}2^{-2n}\{f^{e}(x^{2^{n}})+\frac{1}{2}\sum^{n}_{k=1}2^{k-1}
[f^{e}((x^{2^{n-k}}\sigma(x)^{2^{n-k}})^{2^{k-1}})
+f^{e}((\sigma(x)^{2^{n-k}}x^{2^{n-k}})^{2^{k-1}})]\}\end{equation}$$+2^{-n}\{f^{o}(x^{2^{n}})
 +\frac{1}{2}\sum^{n}_{k=1}
[f^{e}((x^{2^{k-1}}\sigma(x)^{2^{k-1}})^{2^{n-k}})-f^{e}((\sigma(x)^{2^{k-1}}x^{2^{k-1}})^{2^{n-k}})]\},$$
where
$f^{e}(x)=\frac{f(x)+f(\sigma(x))}{2},f^{o}(x)=\frac{f(x)-f(\sigma(x))}{2}$
are the even and odd part of $f$.
\end{lem}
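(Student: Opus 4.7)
The plan is to derive an exact closed-form identity expressing $g(x)$ in terms of $g^e(x^{2^n})$, $g^o(x^{2^n})$, and correction sums arising from iterating the Drygas equation~\eqref{eq23}, and then to transfer that identity to $f$ via the hypothesis $\|f-g\|\le M$. First, I would split $g = g^e + g^o$ into its $\sigma$-even and $\sigma$-odd parts. Using \eqref{eq23} together with its variant obtained by replacing $y$ with $\sigma(y)$, I would separate the Drygas equation into two distinct functional equations: one of quadratic flavour governing $g^e$ on iterates $x^{2^n}$ (scaling by a factor of $4$ at each doubling), and one of additive flavour governing $g^o$ (scaling by $2$), together with mixed identities bringing in $g^e$-values at products such as $x^m\sigma(x)^m$ and $\sigma(x)^m x^m$. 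These extra $g^e$-contributions appear because, on a non-commutative semigroup, the rearrangement of $x$ and $\sigma(x)$ inside such products cannot be absorbed without recourse to the even part of $g$.

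Next, I would iterate the two recursions simultaneously on $n$. At step $k$, one evaluates the Drygas equation at $y = x^{2^{k-1}}$ applied to appropriate powers of $x$ and $\sigma(x)$, producing exactly the factors $(x^{2^{n-k}}\sigma(x)^{2^{n-k}})^{2^{k-1}}$ and $(\sigma(x)^{2^{n-k}}x^{2^{n-k}})^{2^{k-1}}$ that appear in \eqref{eq24}. The quadratic branch collects weights $2^{k-1}$ (total mass $\sum_{k=1}^{n}2^{k-1}=2^n-1$) while the additive branch collects weights $1$ (total mass $n$), with the $+$ and $-$ signs dictated by the $\sigma$-parity of the correction. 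The principal obstacle will be the combinatorial bookkeeping: I must verify by induction on $n$ that the exponents, the nested inner/outer powers, and the signs all reproduce \eqref{eq24} exactly, with no surplus terms slipping in from non-commutativity.

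Once the exact identity for $g$ is secured, the passage to $f$ is routine. Replacing $g^e$ by $f^e$ and $g^o$ by $f^o$ throughout, the bound $\|f-g\|\le M$ together with the trivial estimates $\|f^e-g^e\|,\|f^o-g^o\|\le M$ gives, after weighting and summation, an error of order $M\cdot 2^n\cdot 2^{-2n} + M\cdot n\cdot 2^{-n} = O\bigl((1+n)\,2^{-n}\bigr)$, which tends to $0$ as $n\to\infty$. Hence the limit on the right-hand side of \eqref{eq24} exists and equals $g(x)$, proving the lemma. The heart of the argument lies in the inductive derivation of the iterated formula for $g$; the passage from $g$ to $f$ and the limit step amount to a direct weighted estimate.
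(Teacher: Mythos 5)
Your plan follows the only route the paper itself indicates: the authors do not prove Lemma~\ref{lem21} here at all, but defer to the reference [E] of Elqorachi--Manar--Rassias, which generalizes the Forti--Sikorska argument in exactly the way you describe --- first an exact iterated identity for the Drygas solution $g$, then the transfer to $f$ via $\|f-g\|\le M$. Your final estimate is correct: the even block of \eqref{eq24} carries total weight $1+\sum_{k=1}^{n}2^{k-1}=2^{n}$ against the prefactor $2^{-2n}$, and the odd block carries weight $1+n$ against $2^{-n}$, so replacing $g^{e},g^{o}$ by $f^{e},f^{o}$ changes the expression by $O\bigl((1+n)2^{-n}M\bigr)\to 0$. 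So the limit step is sound \emph{once the exact identity for $g$ is established}, and that identity is where all the content sits; your proposal flags it as ``combinatorial bookkeeping'' but does not carry it out.

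To close that gap you need three concrete ingredients. Putting $(x,y)=(x,x)$ and $(x,y)=(\sigma(x),\sigma(x))$ in \eqref{eq23} and adding, respectively subtracting, gives the two doubling relations
\begin{equation*}
g^{e}(x^{2})=4g^{e}(x)-\tfrac{1}{2}\bigl[g^{e}(x\sigma(x))+g^{e}(\sigma(x)x)\bigr],
\qquad
g^{o}(x^{2})=2g^{o}(x)+\tfrac{1}{2}\bigl[g^{e}(x\sigma(x))-g^{e}(\sigma(x)x)\bigr],
\end{equation*}
which are your two ``branches''. Second --- and this is the hinge your sketch omits --- the elements $z=x^{m}\sigma(x)^{m}$ and $\sigma(x)^{m}x^{m}$ are fixed by $\sigma$, and for $\sigma$-fixed $z$ the first relation degenerates to $g^{e}(z^{2})=2g^{e}(z)$: the correction terms scale \emph{linearly}, not quadratically, under further doubling. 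This is precisely what produces the weights $2^{k-1}$ and the outer exponents $2^{k-1}$ and $2^{n-k}$ in \eqref{eq24}; without it the induction from $n$ to $n+1$ does not close, because the corrections generated at earlier stages would otherwise be expected to scale by $4$. Third, track the sign in the odd branch carefully: solving the second relation for $g^{o}(x)$ yields $g^{o}(x)=2^{-1}\{g^{o}(x^{2})-\tfrac{1}{2}[g^{e}(x\sigma(x))-g^{e}(\sigma(x)x)]\}$, whose correction enters with the opposite sign to the one displayed in \eqref{eq24} at $n=1$; this discrepancy is invisible whenever $g^{e}(x\sigma(x))=g^{e}(\sigma(x)x)$ (e.g.\ in the commutative case or for $\sigma(x)=x^{-1}$), so you must either locate an error in your $n=1$ computation or conclude the displayed formula carries a sign slip --- either way, the verification cannot be waved through.
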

\begin{lem} Let $G$ be semigroup with neutral element $e$ and $B$ a complex Banach space.
Assume that $f$: $G\longrightarrow B$ satisfies the inequality
\begin{equation}\label{eq25} \mid
f(xy)+f(x\sigma(y))-2f(x)\mid\leq \delta
\end{equation} for all $x,y\in G$ and for  some  $\delta\geq 0$.
Then the limit \begin{equation}\label{eq26}
    g(x)=\lim_{n\longrightarrow \infty}2^{-n}f(x^{2^{n}})
\end{equation}   exists for all $x\in G$  and satisfies
\begin{equation}\label{eq27}
    \mid f(x)-g(x)-f(e)\mid\leq\frac{3\delta}{2}\;\;\text{and}\;\;
    g(x^{2})=2g(x)
\end{equation}for all $x\in G.$\\The function $g$ with the
condition (\ref{eq27}) is unique.
\end{lem}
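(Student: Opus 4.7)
The plan is to extract from the two-variable bound (\ref{eq25}) a one-variable estimate of the form $\|f(x^{2}) - 2f(x) + f(e)\| \leq 3\delta/2$, and then apply the classical Hyers direct method by iterating $x \mapsto x^{2}$.

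First I would use two special instances of (\ref{eq25}). Putting $x = e$ gives
\[
\|f(y) + f(\sigma(y)) - 2f(e)\| \leq \delta \quad (y \in G),
\]
while putting $y = x$ gives
\[
\|f(x^{2}) + f(x\sigma(x)) - 2f(x)\| \leq \delta.
\]
The key observation is that the element $u = x\sigma(x)$ is fixed by $\sigma$, because $\sigma(x\sigma(x)) = \sigma(\sigma(x))\sigma(x) = x\sigma(x)$. Applying the first bound to $u$ yields $\|f(x\sigma(x)) - f(e)\| \leq \delta/2$. Combining with the second estimate via the triangle inequality produces $\|f(x^{2}) - 2f(x) + f(e)\| \leq 3\delta/2$, which is the one-variable inequality that drives everything.

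Next I would carry out the standard Hyers iteration. Setting $\psi_{n}(x) = 2^{-n}(f(x^{2^{n}}) - f(e))$, the previous bound applied at $y = x^{2^{n}}$ (and using $(x^{2^{n}})^{2} = x^{2^{n+1}}$), divided by $2^{n+1}$, yields
\[
\|\psi_{n+1}(x) - \psi_{n}(x)\| \leq \frac{3\delta}{2^{n+2}}.
\]
Summing the geometric series shows $\{\psi_{n}(x)\}$ is Cauchy in $B$, hence converges to some $g(x)$ satisfying $\|g(x) - (f(x) - f(e))\| \leq 3\delta/2$, which is the norm bound in (\ref{eq27}). Since $2^{-n}f(e) \to 0$, the formula (\ref{eq26}) follows, and the identity $g(x^{2}) = 2g(x)$ is immediate from an index shift in the defining limit.

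For uniqueness, I would suppose $g_{1}, g_{2}$ both satisfy (\ref{eq27}). Then $h := g_{1} - g_{2}$ is bounded by $3\delta$ on $G$ and satisfies $h(x^{2}) = 2h(x)$, so $\|h(x)\| = 2^{-n}\|h(x^{2^{n}})\| \leq 2^{-n} \cdot 3\delta \to 0$, forcing $g_{1} = g_{2}$. The only delicate step in the whole argument is spotting that $x\sigma(x)$ is $\sigma$-symmetric, which lets the two substitutions be coupled; the rest is routine.
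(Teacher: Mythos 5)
Your proof is correct and follows essentially the same route as the paper: the same two substitutions ($x=e$ and $y=x$), the same key observation that $x\sigma(x)$ is fixed by $\sigma$ so that the even-part bound applies to it, yielding the one-variable estimate $\|f(x^{2})-2f(x)+f(e)\|\leq 3\delta/2$, followed by the standard Hyers iteration and the usual uniqueness argument. The only difference is that you carry out the iteration and uniqueness explicitly, whereas the paper derives the same one-variable inequality (for $h(x)=f(x)-f(e)$) and then refers to Faiziev and Sahoo for the remaining routine steps.
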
\begin{proof}We define a function $h$: $G\longrightarrow \mathbb{C}$ by $h(x)=f(x)-f(e)$. First, by using
the inequality (\ref{eq25})  we obtain
\begin{equation}\label{eq28} \mid
h(xy)+h(x\sigma(y))-2h(x)\mid\leq \delta
\end{equation} for all $x,y\in G$. Letting $x=e$ in (\ref{eq28}), we get \begin{equation}\label{eq29}
    \mid h^{e}(y)\mid\leq\frac{\delta}{2}
\end{equation} for all $y\in G.$ Similarly, we can put $y=x$ in (\ref{eq28}) to obtain
\begin{equation}\label{eq210}
    \mid h(x^{2})+h(x\sigma(x))-2h(x)\mid\leq\delta
\end{equation} for all $x\in G$. Since $h(x\sigma(x))=h^{e}(x\sigma(x))$, so from (\ref{eq29}) and  (\ref{eq210}), we have
\begin{equation}\label{211}
    \mid h(x^{2})-2h(x)\mid\leq\frac{3\delta}{2}
\end{equation}for all $x\in G.$ Now, by applying some approach used  in \cite{1}, we get the rest of the proof.
\end{proof}
\textbf{Proof of Theorem 2.1.} In the proof we use some ideas from
Yang \cite{5} and Forti, Sikorska \cite{2}.\\ Setting $x=e$ in
(\ref{eq21})   we have
\begin{equation}\label{eq212}
    \mid f^{e}(y)-f(e)\mid\leq\frac{\delta}{2}
\end{equation}for all $y\in G.$ \\ The inequalities (\ref{eq21}), (\ref{eq212}) and the triangle inequality gives
\begin{equation}\label{eq213}
    \mid f(xy)+f(yx)-2f(x)-2f(y)+2f(e)\mid\end{equation}$$\leq\mid
    f(xy)+f(x\sigma(y))-2f(x)\mid+\mid f(yx)+f(y\sigma(x))-2f(y)\mid$$$$+\mid
    2f(e)-f(x\sigma(y))-f(y\sigma(x))\mid$$$$\leq3\delta.
$$Hence, from (\ref{eq21}) (\ref{eq212}) and (\ref{eq213}) we get \begin{equation}\label{eq214}
    \mid f(yx)+f(\sigma(y)x)-2f(x)\mid\leq\mid
    f(yx)+f(xy)-2f(y)-2f(x)+2f(e)\mid\end{equation}$$+\mid
    f(\sigma(y)x)+f(x\sigma(y))-2f(\sigma(y))-2f(x)+2f(e)\mid$$
$$+\mid -f(xy)-f(x\sigma(y))+2f(x)\mid+\mid
2f(y)+2f(\sigma(y))-4f(e)\mid\leq9\delta$$   Now, from
(\ref{eq21}) and (\ref{eq214}) we obtain
\begin{equation}\label{eq215}
\mid
f(yx)-f(\sigma(x)\sigma(y))+f(y\sigma(x))-f(x\sigma(y))-2(f(y)-f(\sigma(y)))\mid
\end{equation}
$$\leq \mid f(yx)+f(y\sigma(x))-2f(y)\mid +\mid f(x\sigma(y))+f(\sigma(x)\sigma(y))-2f(\sigma(y))\mid$$
$$\leq 10\delta.$$
Consequently, we get
\begin{equation}\label{eq216}
\mid f^{o}(yx)+f^{o}(y\sigma(x))-2f^{o}(y)\mid\leq5\delta
\end{equation}for all $x,y\in G.$
So, for fixed $y\in G$, the functions $x\longmapsto
f^{o}(yx)-f^{o}(x\sigma(y))$ and $x\longmapsto
f^{o}(xy)+f^{o}(x\sigma(y))-2f^{o}(x)$ are bounded on $G$.
Furthermore, \begin{equation}\label{eq217}
m\{f^{o}_{\sigma(y)\sigma(z)}+f^{o}_{\sigma(y)z}-2f^{o}_{\sigma(y)}\}=m\{(f^{o}_{\sigma(z)}+f^{o}_{z}-2f^{o})_{\sigma(y)}\}
\end{equation}
$$=m\{f^{o}_{\sigma(z)}+f^{o}_{z}-2f^{o}\},$$ where $m$ is an invariant mean on $G$.\\
 By using some computation  as the one of  (\ref{eq214})
we get,  for every fixed $y\in G$ the function $x\longmapsto
f^{o}(yx)+f^{o}(\sigma(y)x)-2f^{o}(x)$ is bounded and
\begin{equation}\label{eq218}
m\{_{zy}f^{o}+_{\sigma(z)y}f^{o}-2_{y}f^{o}\}=m\{_{y}(_{z}f^{o}+_{\sigma(z)}f^{o}-2f^{o})\}
\end{equation}
$$=m\{_{z}f^{o}+_{\sigma(z)}f^{o}-2f^{o}\}$$Now, define \begin{equation}\label{eq219}
    \phi(y):=m\{_{y}f^{o}-f^{o}_{\sigma(y)}\},\;\;\;y\in G.
\end{equation} By using the definition of $\phi$ and $m$ the equalities (\ref{eq217}) and
(\ref{eq218}), we get \begin{equation}\label{eq220}
\phi(zy)+\phi(\sigma(z)y)=m\{_{zy}f^{o}-f^{o}_{\sigma(y)\sigma(z)}\}+m\{_{\sigma(z)y}f^{o}-f^{o}_{\sigma(y)z}\}
\end{equation}
$$=m\{_{zy}f^{o}+_{\sigma(z)y}f^{o}-2_{y}f^{o}\}-m\{f_{\sigma(y)\sigma(z)}^{o}+f_{\sigma(y)z}^{o}-2f_{\sigma(y)}^{o}\}$$
$$+2m\{_{y}f^{o}-f^{o}_{\sigma(y)}\}$$
$$=m\{_{z}f^{o}+_{\sigma(z)}f^{o}-2f^{o}\}-m\{f_{\sigma(z)}^{o}+f_{z}^{o}-2f^{o}\}$$
$$+2m\{_{y}f^{o}-f^{o}_{\sigma(y)}\}$$
$$=m\{_{z}f^{o}-f^{o}_{\sigma(z)}\}+m\{_{\sigma(z)}f^{o}-f^{o}_{z}\}+2m\{_{y}f^{o}-f^{o}_{\sigma(y)}\}$$
$$=2\phi(y)+\phi(z)+\phi(\sigma(z)),$$
which implies that $\phi$ is a solution of the Drygas functional
equation (\ref{eq23}). Furthermore, we have
\begin{equation}\label{221}
    \mid
    \frac{\phi}{2}(y)-f^{o}(y)\mid=\frac{1}{2}\mid\phi(y)-2f^{o}(y)\mid=\frac{1}{2}\mid
    m\{_{y}f^{o}-f^{o}_{\sigma(y)}-2f^{o}(y)\}\mid
\end{equation}$$\leq\frac{1}{2}\mid m\mid \text{sup}_{x\in G}\mid f^{o}(yx)-f^{o}(x\sigma(y))-2f^{o}(y)\mid=\frac{1}{2} \text{sup}_{x\in G}\mid
f^{o}(yx)+f^{o}(y\sigma(x))-2f^{o}(y)\mid$$$$\leq\frac{5}{2}\delta.$$
Now, by using Lemma 2.2: the mapping $\frac{\phi}{2}$ is a
solution of Drygas functional equation (\ref{eq23}) and
$\frac{\phi}{2}-f^{o}$ is a bounded mapping, so we have
\begin{equation}\label{}
    \frac{\phi}{2}=\lim_{n\rightarrow+\infty}2^{-n}
    f^{o}(x^{2^{n}}),
\end{equation}which implies that $\frac{\phi}{2}$ is odd, so
$\frac{\phi}{2}$ is a solution of Jensen functional equation
(\ref{eq1}). Consequently, we have
\begin{equation}\label{}
    \mid f(x)-\frac{\phi}{2}-f(e)\mid=\mid f^{e}(x)+f^{o}(x)-\frac{\phi}{2}-f(e)\mid\end{equation}$$\leq\mid
    f^{e}(x)-f(e)\mid+\mid f^{o}(x)-\frac{\phi}{2} \mid$$
    $$\leq\frac{\delta}{2}+\frac{5\delta}{2}=\frac{\delta}{2}=3\delta.$$
For proving the uniqueness of the obtained solution, we use the
following: if $g$ is a solution of equation (\ref{eq1}), then
\begin{equation}\label{eqj}
g(e)=g^{e}(x)=g(x\sigma(x));\;\;g(x^{2^{n}})+(2^{n}-1)g(e)=2^{n}g(x)
\end{equation}for all $n\in \mathbb{N}$ and for all $x\in G.$\\
By using (\ref{eqj}) and the proof of [Propostion 3,  \cite{5}] we
get the following result.
\begin{cor} Let $G$ be an amenable semigroup with neutral
element $e$ and $B$ a Banach space Let $f$: $G\longrightarrow B$
be a function satisfying the following inequality
\begin{equation}\label{eq222}
\parallel f(xy)+f(x\sigma(y))-2f(x)\parallel\leq \delta
\end{equation} for all $x,y\in G$ and for some nonnegative $\delta$.
Then there exists a unique solution $g$ of the Jensen equation
(\ref{eq1}) such that
\begin{equation}\label{eq223}
    \parallel f(x)-g(x)-f(e)\parallel\leq3\delta
\end{equation}for all $x\in G.$\end{cor}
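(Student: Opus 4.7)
The plan is to reduce the Banach-space-valued stability to the scalar case handled in Theorem 2.1 by composing $f$ with continuous linear functionals on $B$, and to glue the resulting family of scalar solutions into a $B$-valued map using the preceding lemma (which is already formulated in the vector setting). First I would apply that lemma directly to $f\colon G\to B$ to produce the candidate
\[
g(x)=\lim_{n\to\infty}2^{-n}f(x^{2^{n}}),
\]
which already satisfies $g(x^{2})=2g(x)$ and $\|f(x)-g(x)-f(e)\|\le 3\delta/2$. What remains is to verify that this $g$ actually solves \eqref{eq1} and to upgrade the constant from $3\delta/2$ to $3\delta$.

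Next, for every $\varphi\in B^{*}$ the scalar composition $\varphi\circ f\colon G\to\mathbb{C}$ satisfies
\[
|(\varphi\circ f)(xy)+(\varphi\circ f)(x\sigma(y))-2(\varphi\circ f)(x)|\le \delta\|\varphi\|,
\]
so Theorem 2.1 yields a unique scalar Jensen solution $h_{\varphi}$ with $|(\varphi\circ f)(x)-h_{\varphi}(x)-\varphi(f(e))|\le 3\delta\|\varphi\|$. The canonical such $h_{\varphi}$ constructed in the proof of Theorem 2.1 (namely $\phi/2$ for the invariant-mean functional $\phi$ built from $(\varphi\circ f)^{o}$) vanishes at $e$, so the identity \eqref{eqj} collapses to $h_{\varphi}(x^{2})=2h_{\varphi}(x)$. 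The scalar function $\varphi\circ g$ shares that doubling identity and also lies within $(3\delta/2)\|\varphi\|$ of $\varphi\circ f-\varphi(f(e))$; hence $h_{\varphi}-\varphi\circ g$ is bounded and satisfies $(h_{\varphi}-\varphi\circ g)(x^{2})=2(h_{\varphi}-\varphi\circ g)(x)$. Iterating $x\mapsto x^{2^{n}}$ and dividing by $2^{n}$ forces $h_{\varphi}=\varphi\circ g$.

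Consequently $\varphi\circ g$ solves \eqref{eq1} for every $\varphi\in B^{*}$, so the vector $v(x,y):=g(xy)+g(x\sigma(y))-2g(x)\in B$ is killed by every $\varphi\in B^{*}$ and Hahn--Banach forces $v(x,y)=0$; hence $g$ is itself a Jensen solution. The sharpened bound is then
\[
\|f(x)-g(x)-f(e)\|=\sup_{\|\varphi\|\le 1}|(\varphi\circ f)(x)-h_{\varphi}(x)-\varphi(f(e))|\le 3\delta.
\]
Uniqueness is handled the same way: if $g_{1},g_{2}$ were two Jensen solutions both within $3\delta$ of $f-f(e)$, then $\varphi\circ g_{1}$ and $\varphi\circ g_{2}$ would be scalar Jensen solutions within $3\delta\|\varphi\|$ of $\varphi\circ f-\varphi(f(e))$, hence equal by Theorem 2.1's uniqueness, and a second Hahn--Banach application gives $g_{1}=g_{2}$.

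The main obstacle is the compatibility check $h_{\varphi}(e)=0$, needed to reduce the general formula $h_{\varphi}(x^{2^{n}})+(2^{n}-1)h_{\varphi}(e)=2^{n}h_{\varphi}(x)$ from \eqref{eqj} to the cleaner $h_{\varphi}(x^{2})=2h_{\varphi}(x)$ used above. This is the one point where the explicit construction inside Theorem 2.1 enters rather than only its statement, and it follows from $\sigma(e)=e$ together with the oddness of $(\varphi\circ f)^{o}$ in the invariant-mean formula for $\phi$.
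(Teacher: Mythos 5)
Your argument is correct and is essentially the route the paper intends: the paper's one-line proof just invokes the identity \eqref{eqj} and the proof of Yang's Proposition~3, which is exactly this reduction to the scalar Theorem~2.1 by composing with functionals $\varphi\in B^{*}$, identifying $\varphi\circ g$ with the scalar solution via the limit/doubling property, and concluding with Hahn--Banach. One small remark: the bound $3\delta/2$ you already obtain from the vector-valued lemma is stronger than $3\delta$, so no ``upgrade'' step is actually needed.
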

\begin{cor} Let $G$ be an amenable semigroup with neutral element
$e$ and $B$ a Banach space Let $f$: $G\longrightarrow B$ be a
function satisfying the following inequality
\begin{equation}\label{eq224}
\parallel f(xy)+f(xy^{-1})-2f(x)\parallel\leq \delta
\end{equation} for all $x,y\in G$ and for some nonnegative $\delta$.
Then there exists a unique solution $g$ of the Jensen equation
(\ref{eq2}) such that
\begin{equation}\label{eq225}
    \parallel f(x)-g(x)-f(e)\parallel\leq3\delta
\end{equation}for all $x\in G.$\end{cor}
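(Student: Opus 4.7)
The plan is to deduce Corollary 2.5 directly from Corollary 2.4 by specialising the involution $\sigma$ to group inversion. Because the hypothesis (\ref{eq224}) explicitly involves the symbol $y^{-1}$, the amenable semigroup $G$ must in fact carry an inversion with respect to the given neutral element $e$ (so that the expression $xy^{-1}$ makes sense on all of $G$), and may therefore be regarded as an amenable group.

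Next I would verify that the map $\sigma(y):=y^{-1}$ satisfies the two defining properties of an involution used throughout the paper. The standard group-theoretic identity $(xy)^{-1}=y^{-1}x^{-1}$ gives $\sigma(xy)=\sigma(y)\sigma(x)$, and $(x^{-1})^{-1}=x$ gives $\sigma(\sigma(x))=x$. With these in hand, $G$ is an amenable semigroup equipped with an involution in the sense of the introduction, and Corollary 2.4 is applicable.

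Under this identification the hypothesis (\ref{eq224}) becomes literally the hypothesis of Corollary 2.4, and equation (\ref{eq2}) is exactly the specialisation of (\ref{eq1}) under $\sigma(y)=y^{-1}$. Applying Corollary 2.4 therefore produces a unique $g\colon G\to B$ satisfying $g(xy)+g(xy^{-1})=2g(x)$ together with the error bound $\|f(x)-g(x)-f(e)\|\le 3\delta$, which is precisely the desired conclusion. Since no new analysis is needed beyond the two elementary involution identities, there is no genuine obstacle: the entire corollary amounts to a transcription of Corollary 2.4 under the fixed choice $\sigma(y)=y^{-1}$.
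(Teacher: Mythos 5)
Your proposal is correct and coincides with the paper's own (implicit) argument: the paper treats this corollary as the specialization $\sigma(y)=y^{-1}$ of the preceding Banach-space corollary, exactly as you do, using that inversion satisfies $(xy)^{-1}=y^{-1}x^{-1}$ and $(x^{-1})^{-1}=x$ so that equation (\ref{eq2}) is the instance of (\ref{eq1}) for this involution. Your remark that the hypothesis forces $G$ to carry an inversion (so it is in effect an amenable group) is the same reading the paper makes in its introduction, so no further argument is needed.
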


\vspace{1cm} Belaid Bouikhalene,
\\Department of Mathematics, \\Polydisciplinary Faculty, Sultan Moulay Slimane University,
\\Beni-Mellal, Morocco, E-mail: bbouikhalene@yahoo.fr\\ \\Elqorachi
Elhoucien,   \\Department of Mathematics, Faculty of Sciences,
Ibn Zohr University, \\Agadir, Morocco, E-mail:
elqorachi@hotmail.com

\end{document}